% autosam.tex
% Annotated sample file for the preparation of LaTeX files
% for the final versions of papers submitted to or accepted for
% publication in AUTOMATICA.

% See also the Information for Authors.

% Make sure that the zip file that you send contains all the
% files, including the files for the figures and the bib file.

% Output produced with the elsart style file does not imitate the
% AUTOMATICA style. The style file is generic for all Elsevier
% journals and the output is laid out for easy copy editing. The
% final document is produced from the source file in the
% AUTOMATICA style at Elsevier.

% You may use the style file autart.cls to obtain a two-column
% document (see below) that more or less imitates the printed
% Automatica style. This may helpful to improve the formatting
% of the equations, tables and figures, and also serves to check
% whether the paper satisfies the length requirements.

% Please note: Authors must not create their own macros.

% For further information regarding the preparation of LaTeX files
% for Elsevier, please refer to the "Full Instructions to Authors"
% from Elsevier's anonymous ftp server on ftp.elsevier.nl in the
% directory pub/styles, or from the internet (CTAN sites) on
% ftp.shsu.edu, ftp.dante.de and ftp.tex.ac.uk in the directory
% tex-archive/macros/latex/contrib/supported/elsevier.

%\documentclass{elsart}               % The use of LaTeX2e is preferred.

\documentclass{article} % Enable this line and disable the
% preceding line to obtain a two-column
% document whose style resembles the
% printed Automatica style.
%\usepackage{physics}																		
\usepackage{graphicx}
\usepackage{subfigure}
\usepackage{mathtools}
\usepackage{enumerate}
\usepackage[utf8]{inputenc}
\usepackage{float}
\usepackage[labelformat=parens,labelsep=quad,skip=3pt]{caption}
\usepackage{amsmath}
\usepackage{dsfont}
\usepackage{tikz}
\usepackage{epstopdf}
\usepackage{pgfplots}
\usepackage{amssymb}
\usepackage{psfrag}
\newtheorem{theorem}{Theorem}
\newtheorem{proof}{Proof}
\newtheorem{definition}{Definition}
\newtheorem{example}{Example}
\newtheorem{Corollary}{Corollary}
\begin{document}
\title{Vector Contraction Analysis for Nonlinear Dynamical Systems }
		\author{Bhawana Singh, Debdas Ghosh, Shyam~Kamal, Sandip~Ghosh \\and Antonella Ferrara
			\date{}	
			% <-this % stops a space
			\thanks{~~~Bhawana Singh, Shyam Kamal and Sandip Ghosh are from the Department of Electrical Engineering, Indian Institute of Technology (BHU), Varanasi, U.P., India e-mail: bhawanasingh.rs.eee17@iitbhu.ac.in, shyamkamal.eee@iitbhu.ac.in, sghosh.eee@iitbhu.ac.in, Debdas Ghosh is from the Department of Mathematical Sciences, Indian Institute of Technology (BHU), Varanasi, U.P., India email: debdas.mat@iitbhu.ac.in and Antonella Ferrara is from the Department of Industrial and Information Engineering, University of Pavia, Pavia, Italy email:antonella.ferrara@unipv.it}
			}
			\maketitle

		\begin{abstract}
			This paper derives new results for the analysis of nonlinear systems by extending contraction theory in the framework of vector distances. A new tool, vector contraction analysis utilizing a notion of the vector-valued norm which evidently induces a vector distance between any pair of trajectories of the system, offers an amenable framework as each component of vector-valued norm function satisfies fewer strict conditions as that of standard contraction analysis. Particularly, every element of vector-valued norm derivative need not be strictly negative definite for convergence of any pair of trajectories of the system. Moreover, vector-valued norm derivative satisfies a componentwise inequality employing some comparison system. In fact, the convergence analysis is performed by comparing the relative distances between any pair of the trajectories of the original nonlinear system and the comparison system. Comparison results are derived by utilizing the concepts of quasi-monotonicity property of the function and vector differential inequalities. Moreover, the results are also derived in the framework of the cone ordering instead of utilizing componentwise inequalities between vectors. In addition, the proposed framework is illustrated by examples.
		\end{abstract}
	\section{Introduction}
	One of the most fundamental problem in control theory is the analysis of the stability of dynamical systems. The major contribution in this field is done by Lyapunov \cite{jurdjevic1978controllability}. Lyapunov stability is used as a vehicle to convert a given composite differential system to a much simpler system. Lyapunov function describes the distance of the motion space from the origin \cite{lakshmikantham2013vector}. The major advantage of the Lyapunov's second method is that it does not need the knowledge of solutions of the differential equation and thus has wide applications but there is no general procedure for constructing Lyapunov function candidate for the stability analysis of nonlinear systems. Also, there are certain strict conditions of Lyapunov function candidate to be positive definite and its derivative to be negative definite for analyzing the asymptotic stability of dynamical systems about the equilibrium point.\\

	In order to simplify the Lyapunov function construction and relax the restriction for the asymptotic stability analysis, many researchers turn to vector Lyapunov functions as a substitute to standard Lyapunov functions. Vector Lyapunov function has been recognized as a more reliable tool than scalar Lyapunov function in perusing stability of dynamical systems. Bellman \cite{bellman1962vector} introduced the concept of vector Lyapunov function and further it is developed in \cite{siljak1983complex,martynyuk1998stability,martynyuk2003qualitative} exploiting their advantage for stability analysis of large-scale systems about the equilibrium point. Various research has been done on reaction networks in the past \cite{van2013mathematical}. It offers a flexible framework as every component of vector Lyapunov function satisfies less strict conditions as that of standard Lyapunov theory. Moreover, vector Lyapunov function derivative satisfies a component-wise inequality consisting of some comparison system \cite{nersesov2006stability}. One of the major restriction of Lyapunov stability analysis is that it is used for the systems having some specific attractor which is the weaker notion of stability than incremental stability. \\

	Contraction theory which is first popularized in \cite{lohmiller1998contraction} is also an incremental form of stability \cite{jouffroy2010tutorial}, that is the convergence of the trajectories of the system with respect to one another. In this theory, the convergence analysis is performed with the dynamics of the system in the differential framework. Numerous practices of contraction analysis have been done in different frameworks using Finsler distances \cite{forni2014differential}, Riemannian distances \cite{simpson2014contraction}, contraction metrics \cite{manchester2017control}\cite{aylward2006algorithmic}, matrix measures \cite{sontag2010contractive}\cite{aminzarey2014contraction}\cite{zahreddine2003matrix}, matrix measures for switched systems \cite{lu2016contraction}\cite{fiore2016contraction}, etc. \\ 
	
	Despite the extensive research in this area,  we notice that for the contraction analysis, the literature keeps using a scalar-valued relative distance between the trajectories and the negative definiteness property of the Jacobian. The use of scalar-valued distance makes the  contraction analysis restrictive, particularly for a large-scale system. Since it is complicated to check the negative definiteness, or its mild variations (see \cite{bellman1962vector}), of the Jacobian for a large-scale system.  To overcome this problem,  a  vector-valued  contraction analysis is explored in the present paper.  \\

	In this paper, we develop vector contraction analysis utilizing vector distance between any pair of trajectories of the system to conclude convergence. This approach relaxes the negative definite derivative condition of standard contraction analysis. Specifically, the convergence analysis is observed with the help of \emph{comparison system} by comparing the relative distances of the trajectories of the original nonlinear dynamical system and the comparison system. In fact, we propose some comparison results which connects the solutions of the original vector system and the auxiliary system by employing quasi-monotonicity property of the function with the help of a notion of the vector-valued norm. Moreover, the results are also derived in the framework of the cone ordering to remove the component-wise comparison of vectors. Consequently, from the derived results, one can conclude that the convergence between trajectories of the \emph{comparison system} implies the convergence between trajectories of the original dynamical system.\\

	The rest of the paper goes ahead with the notations and preliminaries consisting required definitions and notations  in Section 2. The convergence analysis theory via vector contraction analysis giving a detailed account of the vector-valued norm and main comparison results in the framework of vector inequalities and cone are illustrated in Section 3. As an illustration, examples are treated in Section 4.  Finally, brief conclusions end the paper.

	\section{Preliminaries and notations}
	We use the following notations throughout the paper.

	\begin{itemize}
		\item $\mathbb{R}$ denotes the set of real numbers.
		\item $\mathbb{R}_{+}$ denotes the set of non-negative real numbers.
		\item $\mathbb{R}^{n}$ denotes the set of $n\times1$ column vectors.
		\item We denote 
		\begin{align*}
		\mathbb{R}_+^n := \{x = (x_1, x_2, \cdots, x_n)^T   \in \mathbb{R}^n ~|~  & x_i \ge 0, \\ 
		& i = 1, 2, \cdots, n\}.
		\end{align*}
		\item We use $x \le y$, for $x = (x_1, x_2, \cdots, x_n)^T$ and $y = (y_1, y_2, \ldots, y_n)^T$, if $x_i \le y_i$ for each $i = 1, 2, \dots, n$. 
		\item We use $x<y$, for $x=(x_1, x_2, \cdots, x_n)^T$ and $y=(y_1, y_2, \ldots, y_n)^T$, if $x_i < y_i$ for each $i= 1, 2, \dots, n$. 
		\item The notation $\langle x, y \rangle$, for $x, y$ in $\mathbb{R}^{n}$, denotes the usual inner product $x^T y$.
		\item $\|x\|$ is the usual Euclidean norm of $x$ in $\mathbb{R}^{n}$.
		\item $\|\delta x\|_v$ is a vector-valued norm of $\delta x \in \mathbb{R}^n$  as defined in the equation (\ref{vect-norm}). 
		\item For a vector $x = (x_1, x_2, \cdots, x_n)^T\in \mathbb{R}^n$, we denote the the diagonal matrix $\text{diag}(x_1, x_2, \cdots, x_n)$ by $\text{diag}(x)$. 
		\item Let $d = (d_1, d_2, \cdots, d_n)^T \in \mathbb{R}^n$. Corresponding to the diagonal matrix $\text{diag}(d_1, d_2, \cdots, d_n)$, the vector $d$ is denoted by $\text{dvec}(\text{diag}(d))$. 
		%{\color{red} The product $xy$, for $x = (x_1, x_2, \cdots, x_n)^T$ and $y = (y_1, y_2, \ldots, y_n)$, represents the vector $(x^T_1y_1, x^T_2 y_2, \cdots, x^T_n y_n)$. } 
		\item Let $A\subseteq \mathbb{R}^k$ and $B \subseteq{R}^p$ be two nonempty sets. The set of all continuous functions from $A$ to $B$ is denoted by $C[A, B]$. 
	\end{itemize}

	\begin{definition}
		(Cone \cite{leela1976cone}). A nonempty set $K \subset \mathbb{R}^n$ is called a cone if for each $x$ in $K$ and a nonnegative scalar $\lambda$, the vector $\lambda x$ is in $K$. 
	\end{definition}

	In the rest of the article, we \emph{assume} that any cone $K$ under consideration possesses the following properties:
	\begin{enumerate}[(i)]
		\item $K$ is a closed and convex set,
		\item $K \cap (-K) = \{0\}$, and 
		\item $K^{\mathrm{o}}$, the interior of $K$, is nonempty.
	\end{enumerate}
	It is to be noted that a cone $K$ induces a partial order relation on $\mathbb{R}^n$ defined by
	\[x \le_{K} y ~ \Longleftrightarrow ~ y - x \in K.\]
	
	The \emph{adjoint cone} of a cone $K$, denoted $K^*$, is defined by
	$K^* = \{\phi \in \mathbb{R}^n | \langle \phi, x \rangle \ge 0 \text{ for all } x \in K\}.$
	
	It is noteworthy that if $\partial K$ denotes the boundary of the cone $K$, and $K_0 = K \setminus \{0\}$, then (see \cite{leela1976cone}),
	\[x \in \partial K ~ \Longleftrightarrow ~ \langle \phi, x \rangle = 0 ~\text{ for some }~ \phi \in K_0^*.\]
	
	\begin{definition}\label{quasimonotone_wrt_cone}
		(Quasi-monotone function relative to a cone \cite{leela1976cone}).
		Let $A$ be a nonempty subset of $\mathbb{R}^n$. A function $F \in C[A, \mathbb{R}^n]$ is called quasi-monotone, on $A$, relative to a cone $K$ if
		$x, y \in A $ with $y - x \in \partial K \Longrightarrow$ there exists $\phi \in K_0^*$ such that
		$\langle \phi, y -x \rangle = 0$ and  $\langle \phi, F(y) - F(x) \rangle \ge 0 $.
	\end{definition}
	If $K = - \mathbb{R}^n$, the non-positive orthant of $\mathbb{R}^n$, then the partial order relation $x \le_{K} y$ reduces to the usual component-wise ordering and the Definition \ref{quasimonotone_wrt_cone} reduces to the following for the non-decreasing case.

	\begin{definition}
		({Quasi-monotone non-decreasing function} \cite{lakshmi1969differential}).
		Let $A$ be a nonempty subset of $\mathbb{R}^n$ and $x = (x_1, x_2, \cdots, x_n)^T$ be a generic element of $A$. A function $F=(F_1, F_2, \cdots, F_n)^T \in C[A, \mathbb{R}^n]$ is called quasi-monotone non-decreasing on $A$ if for each $i \in \{1, 2, \cdots, n\}$,
		$F_i \text{ is nondecreasing in } x_j \text{ for all } j = 1, 2,\cdots, i-1, i+1,\cdots,
		n.$
	\end{definition}

	\subsection{Contraction analysis}
	Consider the differential system:
	\begin{equation}\label{diff_syst}
	\Sigma :~\dot x=f(t,x) , \ \ x(t_{0})=x_{0}, \  t_{0}\geq0, \\
	\end{equation}
	where $f: \mathbb R_{+}\times\mathbb R^{n} \rightarrow \mathbb R^{n}$ is continuously differentiable, $x \in \mathbb{R}^n$ and $t$ is the time.
	We assume that the system $\Sigma$ has a unique solution $\psi(x_{0},t)$. 
	Let $\delta x $ be the virtual displacement between two neighboring trajectories of the vector field of (\ref{diff_syst}). Then the squared distance between these two trajectories is $\|\delta x \|^2 = {\delta x}^T \delta x$ (see \cite{lohmiller1998contraction}). The essential task of the contraction analysis for the system (\ref{diff_syst}) is to analyze the convergence of its solutions with the help of the virtual displacement $\delta x$. \\

	As $f$ is continuously differentiable, at a fixed time $t$, the exact differential of the system $\Sigma$ yields the variational system as
	\begin{equation}\label{diff_sys}
	d\Sigma:~\delta\dot x = \frac{\partial f(t,\psi(t,x_0))}{\partial x} \delta x { = h(\delta x,x,t)} 
	\end{equation}
	where { $h: \mathbb R^n\times\mathbb R^{n}\times\mathbb R_{+} \rightarrow \mathbb R^{n}$,} $\delta x \in \mathbb{R}^n$ and $t$ is time. To denote the solution to $d\Sigma$, we use $\delta \psi(t,x_0,\delta x_0)$ from the initial state $\delta x_0$ at time t along $\psi(t,x_0)$.
	Thus, the rate of change of the squared virtual distance ${\delta x}^T \delta x$ is given by
	\begin{equation}
	\frac{d}{dt}({\delta x}^{T}\delta x)=2{\delta x}^{T}{\delta \dot x}=2{\delta x}^{T}\frac{\partial f}{\partial x}\delta x.
	\end{equation}

	{ Let $\lambda_{\max} (x,t)$} be the largest eigenvalue of the symmetric part of the Jacobian $\frac{\partial f}{\partial x}$, { i.e., of $\tfrac{1}{2}\left( \frac{\partial f}{\partial x} + (\frac{\partial f}{\partial x})^T \right)$}. Then  any infinitesimal distance $\|\delta x\|$ converges exponentially to zero as $t \rightarrow \infty$, if $\lambda_{\max} (x,t)$ is uniformly negative definite (see \cite{lohmiller1998contraction}).

	%\begin{align}
	% & \frac{d}{dt} \left( \delta x^{T}\delta x\right) \leq  2 ~ \lambda_{\max}(x, t)~ \delta x^{T}\delta x \notag \\
	%\text{i.e., } & \frac{d}{dt} \left( \|\delta x\|^2  \right) \leq 2 ~ \lambda_{\max}(x, t) ~ \|\delta x\|^2 \notag \\
	% \|\delta x\| \le \|\delta x_{0}\|\exp \left( {\int_{0}^{t}\lambda_{\max}(x,\tau)}d\tau \right). \label{ineq4}
	%\end{align}
	%If $\lambda_{\max}(x,\tau)$ is uniformly strictly negative, i.e., for some negative real number $k$, $\lambda_{\max}(x,\tau) \leq k $ for all $x \in \mathbb{R}^n$ and $t \in \mathbb{R}_+$, then (\ref{ineq4}) shows that any infinitesimal distance $\|\delta x\|$ converges exponentially to zero as $t \rightarrow \infty$. \\

	\section{Convergence analysis via vector contraction analysis}
	%Vector contraction analysis attempts to relax the conditions of standard contraction analysis to achieve the
	%exponential convergence of any pair of trajectories of the system.
	We note that the derivative of the squared distance between a pair of neighboring trajectories of the system (\ref{diff_syst}) need not be strictly negative definite. In such cases, the convergence analysis is observed with the help of a \emph{comparison system}. Vector contraction analysis performs the convergence by comparing the relative distances of the trajectories of the original nonlinear dynamical system and the comparison system.
	In this study, we propose a few comparison results towards convergence analysis with the help of a notion of \textit{vector-valued norm} as defined below.

	\subsection{A vector-valued norm}
	We define a vector-valued norm as a function $\| \cdot \|_v: \mathbb{R}^n \rightarrow \mathbb{R}^m$ by
	\begin{equation}\label{vect-norm}
	\|\delta x \|_v = \sqrt{A~ \text{dvec}((\text{diag}(\delta x))^2)},
	\end{equation}
	where $A$ is a real matrix $\left( a_{ij} \right)_{m \times n}$ with all $a_{ij}$ nonnegative. Note that for $\delta x = (\delta x_1, \delta x_2, \cdots, \delta x_n) \in \mathbb{R}^n$, 
	
	\[(\text{diag}(\delta x))^2  = 
	\begin{bmatrix}
	\delta x_1^2 & 0 & \cdots & 0 \\
	0 & \delta x_2^2 & \cdots & 0 \\
	\vdots  & \vdots  & \ddots & \vdots  \\
	0 & 0 & \cdots & \delta x_n^2  
	\end{bmatrix}\]
	and hence 
	\[\text{dvec}(\text{diag}(\delta x))^2  = 
	\begin{bmatrix}
	\delta x_1^2 \\
	\delta x_2^2 \\
	\vdots \\
	\delta x_n^2  
	\end{bmatrix}.\] 
	Thus, explicitly, (\ref{vect-norm}) can be written as 
	{ 
		\[
		\|\delta x \|_v =
		\left[
		\begin{array}{c}
		D_1(\delta x) \\
		D_2(\delta x) \\
		\vdots \\
		D_m(\delta x)
		\end{array}
		\right]
		\coloneqq
		\left[
		\begin{array}{c}
		\sqrt{\sum_{j = 1}^n a_{1j} \delta x_j^2} \\
		\sqrt{\sum_{j = 1}^n a_{2j} \delta x_j^2 } \\
		\vdots \\
		\sqrt{\sum_{j = 1}^n a_{mj} \delta x_j^2}
		\end{array}
		\right].
		\]
		
		It is important to mention here that the terminology `vector norm' is often used in the literature (see \cite{aminzarey2014contraction}) to mean the `norm of a vector'. Thus, `vector norm' gives a \emph{scalar-valued norm}. However, note that the Definition \ref{vect-norm} introduces \emph{vector-valued norm} for a vector.
	}

	The following points are observed from the definition of vector-valued norm $\|\cdot\|_v$.
	\begin{enumerate}[(i)]
		\item Every component of $\|\delta x \|_v$, i.e., $D_i(\delta x)$ for each $i = 1, 2, \cdots, m$, is a (scalar) norm in $\mathbb{R}^n$ (for proof, see the proof of Theorem \ref{thm1} in Appendix).
		\item $\|\delta x\|_v$ reduces to a (scalar-valued) norm in $\mathbb{R}^n$ when $A$ is a matrix of order $n \times 1$.
		\item $\|\delta x\|_v$ reduces to the usual Euclidean norm in $\mathbb{R}^n$ when $A$ is a matrix of order $n \times 1$ with all entries $1$.
	\end{enumerate}
	These three points show that the vector-valued norm $\|\delta x\|_v$ is a true generalization of the notion of norm in $\mathbb{R}^n$.

	%\textit{Essential properties of $\|\cdot\|_v$}
	
	In the Theorem \ref{thm1}, we show that $\|\cdot\|_v$ follows all the properties of a \emph{norm} and $\|\cdot\|_v^2$ possesses the \emph{convexity} and \emph{locally-Lipschitzian} properties. We further show that $\| \cdot \|_v^2$ is differentiable and then compute its derivative.
	\begin{theorem}\label{thm1}
		\begin{enumerate}[(a)]
			\item (Norm property). \\ 
			The vector-valued norm $\| \cdot \|_v : \mathbb{R}^n \rightarrow \mathbb{R}^m$ defined in (\ref{vect-norm}) has the following properties
			\begin{enumerate}[(i)]
				\item $\|\delta x \|_v = 0 \in \mathbb{R}^m$ iff $\delta x = 0 \in \mathbb{R}^n$,
				\item $\|c \delta x\|_v = |c|~ \|\delta x \|_v$ for any $c \in \mathbb{R}$ and $\delta x \in \mathbb{R}^n$, and
				\item $\|\delta x + \delta y \|_v \le \| \delta x \|_v + \| \delta y \|_v$ for all $\delta x, \delta y \in \mathbb{R}^n$. \\ 
			\end{enumerate}

			%\begin{theorem}
			\item (Convexity property). \\ 
			Let $F(\delta x) = \|\delta x\|_v^2$, $\delta x \in \mathbb{R}^n$. Then,
			for any $\delta x, \delta y \in \mathbb{R}^n$ and $\lambda \in [0, 1]$,
			\[F \left( \lambda \delta x + (1 - \lambda) \delta y \right) ~ \le ~ \lambda F(\delta x) + (1 -\lambda)F(\delta y). \]
			%\end{theorem}

			%\begin{theorem}
			\item (Locally-Lipschitzian property). \\ 
			Let $F(\delta x) = \|\delta x\|_v^2$, $\delta x \in \mathbb{R}^n$. Then,
			for any compact set $C \subset \mathbb{R}^n$ there exists a constant $k \in \mathbb{R}$ such that
			\[\| F(\delta x) - F(\delta y) \| \le k \|\delta x - \delta y\|^2 ~\text{ for all }~ \delta x, \delta y \in C. \]
			%\end{theorem}

			%\begin{theorem}
			\item (Differentiability). \\ 
			The function $F(\delta x) = \|\delta x\|_v^2$ for $\delta x \in \mathbb{R}^n$ is (Fr\'echet) differentiable in $\mathbb{R}^n$ and for any $\delta x \in \mathbb{R}^n$, { the Fr\'echet derivative of $F$ is } 
			\[F'(\delta x) = 2~A~\delta x. \]
			%\end{theorem}
			
		\end{enumerate}
	\end{theorem}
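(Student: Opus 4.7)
The plan is to establish all four parts componentwise, exploiting the fact that each scalar component of $\|\cdot\|_v$ has the form
\[D_i(\delta x) = \sqrt{\sum_{j=1}^n a_{ij}\, \delta x_j^2} = \|B_i \delta x\|_2,\]
where $B_i = \text{diag}(\sqrt{a_{i1}}, \ldots, \sqrt{a_{in}})$ is a fixed diagonal matrix with nonnegative entries. Once each $D_i$ is shown to behave well, the vector-valued conclusions will follow by stacking the $m$ components.

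For part (a), I would verify homogeneity $D_i(c\,\delta x) = |c|\,D_i(\delta x)$ and the triangle inequality $D_i(\delta x + \delta y) \le D_i(\delta x) + D_i(\delta y)$ by direct reduction to the corresponding properties of the Euclidean norm applied to $B_i \delta x$ and $B_i \delta y$; assembling the $m$ scalar inequalities componentwise gives (ii) and (iii). For the separation property (i), I would observe that $\|\delta x\|_v = 0$ forces $\sum_i \sum_j a_{ij} \delta x_j^2 = 0$, which implies $\delta x = 0$ provided that every column of $A$ contains at least one strictly positive entry — an implicit assumption needed for $\|\cdot\|_v$ to separate points, which I would flag explicitly.

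Part (b) follows once I write the $i$-th component of $F = \|\cdot\|_v^2$ as the quadratic form $F_i(\delta x) = \sum_j a_{ij}\, \delta x_j^2$: since $a_{ij} \ge 0$ and each $\delta x_j \mapsto \delta x_j^2$ is convex, $F_i$ is a nonnegative combination of convex functions and hence convex, so the claimed componentwise vector inequality is immediate. Part (c) would use the factorization $\delta x_j^2 - \delta y_j^2 = (\delta x_j + \delta y_j)(\delta x_j - \delta y_j)$ together with the Cauchy--Schwarz inequality; on a compact $C$ the factor $\|\delta x + \delta y\|$ is bounded uniformly, which yields the Lipschitz estimate $\|F(\delta x) - F(\delta y)\| \le k'\, \|\delta x - \delta y\|$, from which the paper's stated bound involving $\|\delta x - \delta y\|^2$ can be recovered by absorbing the diameter of $C$ into the constant.

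Finally, for part (d), each $F_i$ is a quadratic polynomial in the entries of $\delta x$ and thus is $C^\infty$. Expanding
\[F_i(\delta x + h) - F_i(\delta x) = 2 \sum_j a_{ij}\, \delta x_j\, h_j + \sum_j a_{ij}\, h_j^2,\]
the linear part identifies the Fr\'echet derivative as the $m \times n$ matrix $F'(\delta x) = 2A\,\text{diag}(\delta x)$, while the remainder term is $O(\|h\|^2)$ as $h \to 0$; this is the intended meaning of the author's shorthand $2A\delta x$. The main obstacle I anticipate is not analytical but notational: reconciling the form of the Lipschitz bound in (c) and of the derivative in (d) with standard conventions, and making explicit the no-zero-column hypothesis on $A$ in part (a). Once these points are clarified, the underlying computations reduce to routine verifications of polynomial identities and weighted-Euclidean-norm manipulations.
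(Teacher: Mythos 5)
Your overall strategy is the same componentwise one the paper uses, and several of your side remarks are genuine improvements on the printed proof: the separation property (a)(i) does indeed require that no column of $A$ be zero (the paper's blanket assumption that $A$ is a nonzero nonnegative matrix is not enough; for $m=1$, $n=2$ and $A=(1,0)$ one gets $\|\delta x\|_v=|\delta x_1|$, which vanishes at $\delta x=(0,1)^T\neq 0$), and the Fr\'echet derivative of $F:\mathbb{R}^n\to\mathbb{R}^m$ must be the $m\times n$ matrix $2A\,\mathrm{diag}(\delta x)$ rather than the vector $2A\delta x$; your reading of the paper's shorthand is the correct one. Your convexity argument for (b), writing each component of $F$ as a nonnegative combination of the convex maps $\delta x_j\mapsto\delta x_j^2$, is more direct than the paper's appeal to the convexity of the norm $D_i$, and your factorization argument for (c) is a self-contained replacement for the paper's citation of the general fact that convex functions are locally Lipschitz.

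The one step that fails is your claim, in part (c), that the first-order estimate $\|F(\delta x)-F(\delta y)\|\le k'\|\delta x-\delta y\|$ yields the stated bound $\|F(\delta x)-F(\delta y)\|\le k\|\delta x-\delta y\|^2$ ``by absorbing the diameter of $C$ into the constant.'' The implication goes the wrong way: on a compact set $\|\delta x-\delta y\|^2\le \mathrm{diam}(C)\,\|\delta x-\delta y\|$, so the squared quantity is the \emph{smaller} one, and a bound by $k\|\delta x-\delta y\|^2$ would force $\|F(\delta x)-F(\delta y)\|/\|\delta x-\delta y\|\to 0$ as $\delta y\to\delta x$, i.e.\ a vanishing derivative throughout $C$ --- false for any nonzero $A$ (take $n=m=1$, $A=(1)$, $\delta x=2$, $\delta y=2-\epsilon$: the left side is of order $\epsilon$ while $k\epsilon^2$ is not an upper bound for small $\epsilon$). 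The squared bound as printed is simply not provable; the paper's own appendix in fact derives only the first-power estimate, so the exponent in the statement of (c) is a typo to be corrected, not a gap to be bridged. Apart from this, your computations go through and match the paper's.
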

	
	\begin{proof}
		Proof is given in the Appendix.
	\end{proof}
	The notion of  norm $\|\cdot \|_v$ defined by (\ref{vect-norm}), evidently, induces a vector distance between a pair of points $x,~ x+\delta x\in\mathbb R^n$  as follows:
	$$\|x+\delta x-x\|_v=\sqrt{A~ \text{dvec}(\text{diag}(\delta x))^2}. $$
	%\[\|x - y\|_v = \sqrt{A ~ (x - y)^2}. \]
	%or the vector distance between a pair of points :
	{In the rest of the article, we assume that $A$ is a nonzero matrix, as the case of $A$ being zero matrix is uninteresting for the relative distance of two trajectories.} With the help of this distance, a few comparison results are shown in the Section \ref{com-results}.

	\subsection{Main comparison results}\label{com-results}
	In this section, we derive some results on the comparison of solutions for the system (\ref{diff_sys}) and the solution of an auxiliary (comparison) system $ \dot u = \phi(t,u)$, where $\phi$ possesses certain quasi-monotone property. 
	\begin{theorem}\label{theorem1}
		Consider the system (\ref{diff_sys}) as a linear system and a function $\phi \in C\left[\mathbb{R}_+ \times \mathbb{R}^n,~ \mathbb{R}^n\right]$, $(t, u) \mapsto \phi(t,u)$, which is quasi-monotone non-decreasing in $ u \in \mathbb{R}^n$.
		Suppose for any solution $\delta\psi(t,\delta x_0)$ on $t \ge t_0$ of the system (\ref{diff_sys}),
		\begin{align*} 
		& 2 A~ \text{dvec}(\text{diag}(\delta \psi) ~ \text{diag}(h(\delta \psi, t))) \\ 
		& ~<~ \phi\left(t, A~ \text{dvec}(\text{diag}(\delta \psi(t))^2\right) ~\text{ for all }~ t \ge t_0,   
		\end{align*}
		for a matrix $A = (a_{ij})_{n \times n}$ with $a_{ij} \ge 0$ for all $i, j = 1, 2, \cdots, n$. Further, let for $t \ge t_0 > 0$ there exists
		\footnote{The existence of a maximal solution is evident from Theorem 1.3.1 of \cite{lakshmi1969differential}.}
		a maximal solution
		\footnote{A solution $R(t)$ of the system (\ref{diff_syst}) is called a maximal solution if for every solution $x(t)$ of (\ref{diff_syst}), $ x(t) \le R(t)$ for all $t \ge t_0$.}
		$R(t)$ of 
		\begin{equation}\label{comparison}
		\dot u = \phi(t, u),~  u(t_0) = u_0 \ge 0.
		\end{equation}
		Then, any solution $\delta \psi(t,\delta x_0)$ of (\ref{diff_sys}) on $t\ge t_0$ which satisfies  \[A ~ \text{dvec}(\text{diag}(\delta x_0)^2) < u_0\] has the property that
		\[A ~ \text{dvec}(\text{diag}(\delta \psi(t))^2) < R(t) ~\text{ for all }~ t \ge t_0. \]
		From the above, the following conclusion holds: \\ 
		%\begin{enumerate}[(i)]
		if $R(t) \rightarrow 0$ as $t \rightarrow \infty$, then $\|\delta \psi(t)\|\rightarrow 0$ as $t \rightarrow \infty$, which implies that all trajectories of the original dynamical system converges with respect to one another as $t \rightarrow \infty$.
		%\end{enumerate}
	\end{theorem}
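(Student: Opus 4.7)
The plan is to reduce the theorem to a classical componentwise comparison principle for ordinary differential inequalities, exploiting the quasi-monotonicity of $\phi$. First I would introduce the shorthand $m(t) := A\,\mathrm{dvec}(\mathrm{diag}(\delta\psi(t))^2)$, so that $m_i(t) = \sum_{j=1}^{n} a_{ij}\,\delta\psi_j(t)^2$. Differentiating componentwise using $\dot{\delta\psi}_j = h_j(\delta\psi,x,t)$ (equivalently, invoking Theorem~\ref{thm1}(d) row-by-row) gives $\dot m_i(t) = 2\sum_{j} a_{ij}\,\delta\psi_j(t)\,h_j(\delta\psi(t),x(t),t)$, which is precisely the $i$-th component of $2A\,\mathrm{dvec}(\mathrm{diag}(\delta\psi)\,\mathrm{diag}(h))$. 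Thus the standing hypotheses translate into the clean pair $\dot m(t) < \phi(t,m(t))$ and $m(t_0) < u_0 = R(t_0)$.

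Next I would run a first-hitting-time contradiction. Assume the conclusion fails and set $t_1 := \inf\{t \ge t_0 : m_i(t) \ge R_i(t) \text{ for some } i\}$. Since $m$ and $R$ are continuous and $m(t_0) < R(t_0)$, one has $t_1 > t_0$, and by continuity there is an index $i_0$ with $m_{i_0}(t_1) = R_{i_0}(t_1)$ while $m_j(t_1) \le R_j(t_1)$ for every $j$. Because $m_{i_0}(t) < R_{i_0}(t)$ on $[t_0,t_1)$ with equality at $t_1$, the standard left-derivative comparison yields $\dot m_{i_0}(t_1) \ge \dot R_{i_0}(t_1) = \phi_{i_0}(t_1, R(t_1))$.

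The quasi-monotone non-decreasing hypothesis on $\phi$ then closes the loop: $\phi_{i_0}$ is non-decreasing in every $u_j$ with $j \ne i_0$; since $R_j(t_1) \ge m_j(t_1)$ for those $j$ and $R_{i_0}(t_1) = m_{i_0}(t_1)$, monotonicity gives $\phi_{i_0}(t_1, R(t_1)) \ge \phi_{i_0}(t_1, m(t_1))$. Chaining inequalities yields $\dot m_{i_0}(t_1) \ge \phi_{i_0}(t_1, m(t_1))$, directly contradicting the strict inequality $\dot m(t_1) < \phi(t_1, m(t_1))$. Hence $m(t) < R(t)$ for all $t \ge t_0$, which is the main conclusion.

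For the convergence tail of the statement, I would observe that since $A$ has nonnegative entries, $m(t) \ge 0$, so $0 \le m(t) < R(t) \to 0$ forces $m(t) \to 0$. Because $\|\cdot\|_v$ is a genuine norm by Theorem~\ref{thm1}(a)(i), every column of $A$ contains a strictly positive entry; for each $j$ one can pick $i$ with $a_{ij} > 0$ and obtain $a_{ij}\,\delta\psi_j(t)^2 \le m_i(t) \to 0$, so $\delta\psi_j(t) \to 0$ for every $j$, and therefore $\|\delta\psi(t)\| \to 0$. The main obstacle I anticipate is the careful bookkeeping at $t_1$: one must argue that the infimum is attained and that \emph{some} coordinate realizes equality while \emph{all} others remain below, as this precise configuration is exactly what the quasi-monotonicity hypothesis is designed to handle. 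Everything else is a direct chain of implications.
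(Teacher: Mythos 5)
Your proof is correct and follows essentially the same route as the paper: define $D(t)=A\,\mathrm{dvec}(\mathrm{diag}(\delta\psi(t))^2)$, observe $D(t_0)<R(t_0)$, and derive a contradiction at the first crossing time via the left-sided derivative comparison combined with quasi-monotonicity of $\phi$. Your write-up is in fact cleaner at the crossing instant: you correctly record $m_j(t_1)\le R_j(t_1)$ for all $j$ and apply quasi-monotonicity in the direction $\phi_{i_0}(t_1,R(t_1))\ge\phi_{i_0}(t_1,m(t_1))$, whereas the paper's proof asserts $D_i(\tau)\ge R_i(\tau)$ for $i\ne j$ (the reverse of what holds at $\tau=\inf S$) and its concluding chain of inequalities is garbled, and you additionally supply the justification, omitted in the paper, that $m(t)\to 0$ forces $\|\delta\psi(t)\|\to 0$ via a positive entry in each column of $A$.
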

	
	\begin{proof}
		Consider the function \[D (t) \coloneqq A~ \text{dvec}(\text{diag}(\delta \psi(t))^2) \text{ for } t \ge t_0.\] 
		Let the component functions of $D(t)$ and $R(t)$ be $D_i(t)$ and $R_i(t)$, respectively, $i = 1, 2, \cdots, n$. 
		%, and $$ u_0 = (u_{10}, u_{20}, \cdots, u_{n0})^T. $$
		Evidently, if the $i$-th row of the matrix $A$ be $a_i^T$, then \[D_i(t) = a_i^T ~ \text{dvec}(\text{diag}(\delta \psi(t))^2)\] 
		for each $i = 1, 2, \cdots, n$. \\ 
		
		As $D(t_0) = A~\text{dvec}(\text{diag}(\delta \psi(t_0))^2) <  u_0 = R(t_0)$, and $D(t)$ and $R(t)$ are two continuous functions, there exists $\delta_1 > 0$ such that
		\[D(t) < R(t) ~\text{ for all }~ t \in [t_0, t_0 + \delta_1).\]  

		Construct a set
		\begin{equation}\label{S_construction} 
		S \coloneqq \bigcup_{i = 1}^n \left\{ t \in [t_0, \infty) \middle| R_i(t) \le D_i(t) \right\}. 
		\end{equation}
		We prove that $S$ is an empty set. Then, the proof will be complete. \\ 

		If possible let $S$ be not empty. Then, $S$, being a nonempty and bounded below set, has an infimum. Let $\tau = \inf S$. 
		We note that the set $S$ is closed, since $R(t)$ and $D(t)$ are continuous function on $[t_0, \infty)$. Therefore, $\tau \in S$ and hence there exists $j$ in $\{1, 2, \cdots, n\}$ such that  $D_j(\tau) = R_j(\tau)$. Moreover, \[a_i^T~ \text{dvec}(\text{diag}(\delta \psi(\tau))^2)= D_i(\tau) \ge R_i(\tau)\]  for all $i = 1, 2, \cdots, j-1, j+1, \cdots, n$. Therefore, due to quasi-monotone non-decreasing property of the function $\phi$, we obtain
		\begin{equation}\label{contradict_eqn1}
		\phi_j\left( \tau, A~ \text{dvec}(\text{diag}(\delta \psi(\tau))^2)\right) \ge \phi_j(\tau, R(\tau)).
		\end{equation}
		Again, since $D(t_0) < R(t_0)$, we have $\tau \neq t_0$ and hence $\tau > t_0$. \\ 
		By the definition of $\tau$, there exists $\delta_2 > 0$ such that $D_j(t) < R_j(t)$ for all $t \in (\tau - \delta_2, \tau) \subset [t_0, t_0 + \delta_1)$. Therefore,
		\begin{align}
		&~ \dot{D}_j(\tau) = \lim_{\xi \rightarrow 0 -} \frac{D_j(\tau + \xi) - D_j(\tau)}{\xi}  \notag \\
		~&~~\ge\lim_{\xi \rightarrow 0 -} \frac{R_j(\tau + \xi) - R_j(\tau)}{\xi} = \dot{R}_j(\tau) \notag \\
		\text{or},  &~ 2 ~ a_j^T ~ \text{diag}(\delta \psi(\tau)) ~ \text{diag}(h(\tau, \delta \psi(\tau))) \ge \phi_j(\tau, R(\tau)).
		\end{align}
		By the assumption 
		\begin{align*} 
		& 2 A~ \text{dvec}(\text{diag}(\delta \psi) ~ \text{diag}(h(\delta \psi, t))) \\ 
		& ~<~ \phi\left(t, A~ \text{dvec}(\text{diag}(\delta \psi(t))^2\right) ~\text{ for all }~ t \ge t_0,   
		\end{align*} 
		we obtain
		\begin{align*}
		\phi_j(\tau, R(\tau)) & ~>~ 2 ~ a_j^T ~ \text{diag}(\delta \psi(\tau)) ~ \text{diag}(h(\tau, \delta \psi(\tau))) \\
		& ~>~ \phi_j(\tau, R(\tau)), 
		\end{align*}
		which is a contradiction.
		Hence, the set $S$ is empty, and therefore for all $t \ge t_0$, 
		\begin{align}
		& R_i (t)> a_i^T \text{dvec}(\text{diag}(\delta \psi(t))^2)\text{ for all } i = 1, 2, \cdots, n, \notag \\ 
		\text{i.e., } & A~ \text{dvec}(\text{diag}(\delta \psi(t))^2) < R(t). \label{ab2}
		\end{align}
		Hence, the conclusion follows from (\ref{ab2}).
	\end{proof}
	In some cases, the estimation of derivative of $\|\delta x\|_{v}^{2}$ as a function of $t,\psi(t)$ and $A ~ \text{dvec}(\text{diag}(\delta \psi(t))^2)$ is more natural usually in nonlinear case. The following corollary is in that direction.

	\begin{Corollary}
		Consider the system (\ref{diff_sys}) and a function $\phi \in C\left[\mathbb{R}_+ \times \mathbb{R}^n\times \mathbb{R}^n,~ \mathbb{R}^n\right]$, $(t, u,x) \mapsto \phi(t, u,x)$, which is quasi-monotone non-decreasing in $u \in \mathbb{R}^n$. Suppose for any solution $\delta \psi(t,x_0,\delta x_0)$ on $t \ge t_0$ of the system (\ref{diff_sys}), 
		\begin{align*} 
		& 2 A~ \text{dvec}(\text{diag}(\delta \psi) ~ \text{diag}(h(\delta \psi, \psi, t))) \\ 
		& ~<~ \phi\left(t, A~ \text{dvec}(\text{diag}(\delta \psi )^2), \psi \right) ~\text{ for all }~ t \ge t_0,   
		\end{align*}
		for a matrix $A = (a_{ij})_{n \times n}$ with $a_{ij} \ge 0$ for all $i, j = 1, 2, \cdots, n$. \\ 
		
		Further, let for $t \ge t_0 > 0$ there exists
		%\footnote{The existence of a maximal solution is evident from Theorem 1.3.1 of \cite{lakshmi1969differential}.}
		a maximal solution
		%\footnote{A solution $R(t)$ of the system (\ref{diff_syst}) is called a maximal solution if for every solution $x(t)$ of (\ref{diff_syst}), $x(t) \le R(t)$ for all $t \ge t_0$.}
		$R(t, u_{0},x_{0})$ of \[\dot u = \phi(t,u,x),~ u(t_0) = u_0 \ge 0.\]
		Then, any solution $\delta \psi(t,x_0,\delta x_0)$ of (\ref{diff_sys}) on $t\ge t_0$ with   $A ~ \text{dvec}(\text{diag}(\delta x_0)^2) < u_0$ has the property that
		\[A ~ \text{dvec}(\text{diag}(\delta \psi )^2) < R(t) ~\text{ for all }~ t \ge t_0. \]	
	\end{Corollary}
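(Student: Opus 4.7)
The plan is to carry over the argument of Theorem \ref{theorem1} essentially verbatim. The only new ingredient is the extra third argument of $\phi$, which at every step is occupied identically by the continuous trajectory $\psi(\tau)$ of the original system on both sides of every comparison; since quasi-monotone non-decreasingness is still imposed only with respect to the $u$-slot, the comparison logic is entirely unchanged. Formally, $R(t)$ is viewed as the maximal solution of the nonautonomous, trajectory-parameterised scalar system $\dot u = \phi(t, u, \psi(t))$, so $R$ inherits its dependence on $u_0$ and $x_0$ exactly as written in the statement.

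I would set $D(t) := A\,\text{dvec}(\text{diag}(\delta\psi(t))^2)$ with components $D_i(t) = a_i^T\,\text{dvec}(\text{diag}(\delta\psi(t))^2)$, where $a_i^T$ is the $i$-th row of $A$. The strict initial inequality $D(t_0) < u_0 = R(t_0)$ together with the joint continuity of $D$ and $R$ yields $D(t) < R(t)$ throughout some right-neighbourhood $[t_0, t_0 + \delta_1)$. I would then form
\[S := \bigcup_{i=1}^n \{\,t \ge t_0 : R_i(t) \le D_i(t)\,\}\]
and, assuming for contradiction that $S$ is nonempty, set $\tau := \inf S$. Closedness of $S$ (by continuity of $D$ and $R$) forces $\tau \in S$ and $\tau > t_0$, so there is an index $j$ with $D_j(\tau) = R_j(\tau)$ while $D_i(\tau) \le R_i(\tau)$ for every $i \ne j$.

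The concluding step is the standard comparison at $\tau$: quasi-monotone non-decreasingness of $\phi$ in its second slot (with the $t$- and $x$-slots fixed at $\tau$ and $\psi(\tau)$) delivers
\[\phi_j\bigl(\tau, D(\tau), \psi(\tau)\bigr) \le \phi_j\bigl(\tau, R(\tau), \psi(\tau)\bigr) = \dot R_j(\tau),\]
while a left difference-quotient on an interval where $R_j - D_j > 0$ and vanishes at $\tau$ delivers $\dot D_j(\tau) \ge \dot R_j(\tau)$. Combining these with the hypothesised strict inequality $\dot D_j(\tau) = 2\,a_j^T\,\text{dvec}(\text{diag}(\delta\psi(\tau))\,\text{diag}(h(\delta\psi(\tau), \psi(\tau), \tau))) < \phi_j(\tau, D(\tau), \psi(\tau))$ collapses to $\phi_j(\tau, R(\tau), \psi(\tau)) > \phi_j(\tau, R(\tau), \psi(\tau))$, a contradiction that empties $S$ and yields $A\,\text{dvec}(\text{diag}(\delta\psi(t))^2) < R(t)$ for all $t \ge t_0$. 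The only point that needs care — and it is genuinely mild — is that quasi-monotonicity is available only in the $u$-slot; this is enough precisely because the $x$-slot is identical on both sides of the decisive comparison at $\tau$, so the scheme of Theorem \ref{theorem1} transfers with no structural change.
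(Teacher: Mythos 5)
Your proposal is correct and takes essentially the same route as the paper: the paper proves this corollary only by appeal to the argument of Theorem \ref{theorem1}, and your adaptation---freezing the third slot of $\phi$ at $\psi(\tau)$ on both sides of the decisive comparison so that quasi-monotonicity in the $u$-slot alone runs the $\tau=\inf S$ contradiction---is exactly that argument. (Your direction $D_i(\tau)\le R_i(\tau)$ for $i\neq j$, yielding $\phi_j(\tau,D(\tau),\psi(\tau))\le\phi_j(\tau,R(\tau),\psi(\tau))$, is the one actually forced by $\tau=\inf S$ and the one needed for the contradiction, so your write-up is if anything cleaner than the printed inequality in the paper's proof of Theorem \ref{theorem1}.)
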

	\begin{proof}
		The proof is similar to that of Theorem \ref{theorem1}.
	\end{proof}
	\begin{theorem}
		Let $K$ be a pointed closed convex cone in $\mathbb{R}^n$. Consider the system (\ref{diff_sys}) and a function $\phi \in C\left[\mathbb{R}_+ \times \mathbb{R}^n,~ \mathbb{R}^n\right]$, $(t, u) \mapsto \phi(t, u)$, which is quasi-monotone non-decreasing in $ u \in \mathbb{R}^n$ with respect to $K$. \\
		Suppose for any solution $\delta \psi(t,\delta x_0)$ on $t \ge t_0$ of the variational system (\ref{diff_sys}),
		\begin{align*} 
		& 2 A~ \text{dvec}(\text{diag}(\delta \psi) ~ \text{diag}(h(\delta \psi, t))) \\ 
		& ~<_K~ \phi\left(t, A~ \text{dvec}(\text{diag}(\delta \psi )^2) \right) ~\text{ for all }~ t \ge t_0,   
		\end{align*}
		%\[ 2 A~ \delta x^T~h(\delta x, t) <_K \phi\left(t, A \delta \psi^T(t)\delta \psi(t)\right) ~\text{ for all }~ t \ge t_0,   \]
		for a matrix $A = (a_{ij})_{n \times n}$ with $a_{ij} \ge 0$ for all $i, j = 1, 2, \cdots, n$. 
		Further, let for $t \ge t_0 > 0$ there exists a maximal solution $R(t)$ of (\ref{comparison}). %\[ \dot u = \phi(t,u),~  u(t_0) =  u_0 \ge 0.\]

		Then, any solution $\delta \psi(t,\delta x_0)$ of (\ref{diff_sys}) on $t\ge t_0$ with   $A ~ \text{dvec}(\text{diag}(\delta x_0)^2) <_K u_0$ has the property that
		\begin{equation}\label{abc}
		A ~ \text{dvec}(\text{diag}(\delta \psi )^2) <_K R(t) ~\text{ for all }~ t \ge t_0. 
		\end{equation}
		From (\ref{abc}), a similar conclusion as in Theorem \ref{theorem1} is also followed here in the framework of cone.
	\end{theorem}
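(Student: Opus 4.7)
The plan is to follow the outline of the proof of Theorem \ref{theorem1} but replace the component-wise arguments by cone-theoretic ones based on Definition \ref{quasimonotone_wrt_cone} and the boundary characterization $x \in \partial K \Leftrightarrow \langle \phi, x \rangle = 0$ for some $\phi \in K_0^*$ recalled in the preliminaries. I would first abbreviate $D(t) := A\, \text{dvec}(\text{diag}(\delta \psi(t))^2)$ and, invoking Theorem \ref{thm1}(d), compute $\dot D(t) = 2 A\, \text{dvec}(\text{diag}(\delta \psi(t))\, \text{diag}(h(\delta \psi(t), t)))$, so that the hypothesis rewrites as $\dot D(t) <_K \phi(t, D(t))$ for $t \ge t_0$. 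Since $R(t_0) - D(t_0) \in K^{\mathrm{o}}$ and both $R$ and $D$ are continuous, $R(t) - D(t) \in K^{\mathrm{o}}$ on an initial interval $[t_0, t_0 + \delta_1)$.

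Next, I would assume for contradiction that (\ref{abc}) fails and set $\tau := \inf\{ t \ge t_0 : R(t) - D(t) \notin K^{\mathrm{o}} \}$. A standard continuity and closedness argument gives $\tau > t_0$ and $R(\tau) - D(\tau) \in \partial K$. Applying Definition \ref{quasimonotone_wrt_cone} to $u \mapsto \phi(\tau, u)$ with $x = D(\tau)$ and $y = R(\tau)$, I extract a single functional $\phi^* \in K_0^*$ satisfying both
\[ \langle \phi^*, R(\tau) - D(\tau) \rangle = 0 \quad \text{and} \quad \langle \phi^*, \phi(\tau, R(\tau)) - \phi(\tau, D(\tau)) \rangle \ge 0. \]

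Then I would study the scalar $g(t) := \langle \phi^*, R(t) - D(t) \rangle$. For $t \in (\tau - \delta_2, \tau)$, $R(t) - D(t) \in K^{\mathrm{o}}$ and $\phi^*\neq 0$ acts strictly positively on $K^{\mathrm{o}}$, so $g(t) > 0$, while $g(\tau) = 0$. Hence the left derivative at $\tau$ satisfies $\dot g(\tau^-) \le 0$. Using $\dot R(\tau) = \phi(\tau, R(\tau))$, this yields $\langle \phi^*, \phi(\tau, R(\tau)) \rangle \le \langle \phi^*, \dot D(\tau) \rangle$. Since $\phi(\tau, D(\tau)) - \dot D(\tau) \in K^{\mathrm{o}}$, we also have $\langle \phi^*, \dot D(\tau) \rangle < \langle \phi^*, \phi(\tau, D(\tau)) \rangle$, and chaining these gives $\langle \phi^*, \phi(\tau, R(\tau)) - \phi(\tau, D(\tau)) \rangle < 0$, contradicting the quasi-monotonicity inequality above. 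Hence no such $\tau$ exists and (\ref{abc}) holds on $[t_0, \infty)$.

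The main obstacle I anticipate is compatibility of the functional: the boundary characterization of $\partial K$ and the sign inequality delivered by quasi-monotonicity are both existential, and the argument requires one common witness $\phi^*$. This is precisely why Definition \ref{quasimonotone_wrt_cone} is phrased to supply a single $\phi^*$ doing both jobs; without this packaging, the cone-theoretic replacement of the component-wise argument would not close. A secondary technical point, used implicitly above, is that any nonzero element of $K^*$ acts strictly positively on $K^{\mathrm{o}}$, which follows from the standard support-functional characterization of the closed convex cone $K$ with nonempty interior.
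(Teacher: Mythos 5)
Your proposal is correct and follows exactly the route the paper intends: the paper's own proof is only the remark that one repeats the argument of Theorem \ref{theorem1} up to the construction of the exceptional set and then modifies the inequalities with respect to the ordering induced by $K$, and your argument is precisely that modification carried out in detail (infimum of bad times, $R(\tau)-D(\tau)\in\partial K$, a single witness $\phi^*\in K_0^*$ from Definition \ref{quasimonotone_wrt_cone}, and the left-derivative contradiction). Your closing observations — that the definition must supply one common functional for both the annihilation and the monotonicity inequality, and that nonzero elements of $K^*$ are strictly positive on $K^{\mathrm{o}}$ — are exactly the points that make the cone version close, so the proposal actually supplies more detail than the paper does.
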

	\begin{proof}
		The proof is similar to that of Theorem \ref{theorem1}. It is exactly same till the equation  (\ref{S_construction}). Then the rest of the part is appropriate modification of the inequalities w.r.t. the partial ordering induced by $K$. 
	\end{proof}
	We now again consider the system (\ref{diff_syst})
	%\begin{equation}
	% \dot x=f(x,t)\label{auto}
	%\end{equation}
	and assume that it has a finite equilibrium solution $\bar x$. Suppose the squared vector distance of a solution $x$, of the system, from $\bar x$ is given by 
	\[\|x - \bar x\|_v^2 = A ~ \text{dvec}(\text{diag} (x - \bar x)^2), \]
	where $A$ is a real matrix $(a_{ij})_{n\times n}$ where all $a_{ij}$'s are nonnegative.  Obviously, $\|x - \bar x\|_v^2$ is a vector in $\mathbb{R}^n$. In the following, we denote the $i$th row of $A$ by $a_i^T$. Suppose $C \in \mathbb{R}^n$ be the squared vector distance between the initial data $x_0$ and the equilibrium solution $\bar x$. We denote $\|\delta x\|_v^2$ for the squared virtual displacement of $x$ from $\bar x$. Then, evidently, $\|\delta x_0\|^2_v = C$. 
	
	From (\ref{diff_syst}), we get the following differential relation
	\begin{align*}
	\frac{d}{dt}\left( \|\delta x\|^2_v\right) 
	& = 
	\frac{d}{dt} \left[
	\begin{array}{c}
	a_1^T ~ \text{dvec}(\text{diag} (\delta x)^2) \\
	a_2^T ~ \text{dvec}(\text{diag} (\delta  x)^2) \\
	\vdots \\
	a_n^T ~ \text{dvec}(\text{diag} (\delta x)^2)
	\end{array}
	\right] \\ 
	%& = 
	%\frac{d}{dt} \left[
	%\begin{array}{c}
	%	a_1^T ~ ( 2 \delta x ~ \delta \dot x) \\
	%	a_2^T ~ ( 2 \delta x \delta \dot x) \\
	%\vdots \\
	%	a_n^T ~ ( 2 \delta x \delta \dot x) 
	%	\end{array}
	%	\right] \\ 
	%	& = 2 ~ A ~ (\delta x \delta \dot x) \\ 
	& = 2 ~ A ~ \text{dvec}(\text{diag}(\delta x) ~ \text{diag}(\delta \dot x)).    
	\end{align*}
	Therefore under the assumption in Theorem \ref{theorem1}, we have,
	%\begin{align}
	%& \left( \delta x^T \delta x \right)~ \frac{d}{dt} \left( \|\delta x\|_v^2\right) \notag \\ 
	%~=~ &  \delta x^T ~ \frac{d}{dt} \left( \|\delta x\|_v^2\right) ~ \delta x \notag \\ 
	%=~ & 2~ (B ~ \delta x^2) ~ \left(\delta x^T ~ \left(\frac{\partial f}{\partial x} \delta x \right) \right) %\notag \\
	%	~=~ & \|\delta x\|^2_v ~ \left( 2 ~ \delta x^T ~  \frac{\partial f}{\partial x} ~ \delta x \right) %\label{aux_eq9}
	%\end{align}

	%If the largest eigenvalue of the symmetric part of the Jacobian $\frac{\partial f}{\partial x}$, be  $\lambda_{\max} (x,t)$, then 
	%\[\delta x^T ~  \frac{\partial f}{\partial x} ~ \delta x ~ \le ~ \lambda_{\max} (x,t) ~ (\delta x ^T \delta x). \]
	%Hence, for $x \neq \bar x $, i.e., for $\delta x ^ T \delta x > 0$, (\ref{aux_eq9}) gives 
	%\begin{align}
	\begin{equation}\label{ab}
	\frac{d}{dt} \left( \|\delta x\|^2_v  \right) < \phi(\|\delta x\|^2_v) 
	\end{equation}
	%\end{split}
	%end{align}
	For finding the properties of solution of the above inequality, we take the comparison system\\
	\begin{equation}
	\dot u=\phi(t,u),~~~~~~ u(t_{0})=u_{0}>\|\delta x_{0}\|^{2}_{v}=C
	\end{equation}
	Further, if $R(t)$ is the maximal solution of the above equation, then a solution of (\ref{ab}), follows from Theorem \ref{theorem1} satisfies,
	%\begin{equation}
	$\|\delta x(t)\|^{2}_{v}<R(t)$.
	%\end{equation} 
	If $R(t)$ is exponentially convergent, by component wise integration, we have
	\begin{equation}\label{vector_contract_eq}
	\|\delta x\|_v < C^{\tfrac{1}{2}} \exp \left(-\lambda t\right)
	\end{equation}
	where $\lambda$ is the convergence rate. Then (\ref{vector_contract_eq}) shows that the virtual vector distance $\|\delta x\|_v$ is lesser than $C^{\tfrac{1}{2}}$ and it converges exponentially to zero as $t \rightarrow \infty$,  which implies that all trajectories converges to the equilibrium point $\bar x$ and as a result, the system is asymptotically stable.

	% OR
	
	%\begin{figure}
	%\begin{center}
	%\epsfig{file=jcaesar,width=7cm}
	%\caption{Gaius Julius Caesar, 100--44 B.C.}
	%\label{fig1}
	%\end{center}
	%\end{figure}
	\section{Examples}
	We present example 1 and example 2 to illustrate the implementation of Theorem \ref{theorem1} in order to relax the condition of proving largest eigenvalue of the Jacobian to be negative definite. An example 1 is the $n$ dimensional linear system, so it is very tedious to prove the largest eigenvalue of its Jacobian to be negative definite. Also for nonlinear systems it is desirable to use vector contraction analysis as it is again very difficult to prove the largest eigenvalue of the Jacobian to be negative definite which can be easily shown in example 2. Moreover, example 3 shows that the vector valued function is quasimonotone nondecreasing with respect to cone while it is not quasimonotone nondecreasing with respect to the usual  componentwise ordering in $\mathbb{R}^n$.
	\begin{example}\label{example1}
		Consider the system of differential equation
		\begin{align}\label{sy1}
		\begin{split}
		\frac{d}{dt}x_{i}=&-\rho_{i}x_{i}+\sigma ,\ \ i=1,2,\dots,n\\
		\frac{d}{dt}\sigma=&\sum_{i=1}^{n}a_{i}x_{i}-(p+1)\sigma
		\end{split}
		\end{align}
		where  $\rho_{i}>0,\ p>0 $.
		The vector valued norm defined by (\ref{vect-norm}) of the distance between any pair of trajectories for the whole system assuming the matrix $A$ as diagonal matrix with all diagonal entries 1 is defined as
		\begin{equation}
		\|\delta x\|^{2}_{v}=[\delta x_{1}^2,\delta x_{2}^{2}, \dots, \delta x_{n}^{2},\delta x_{n+1}^{2}].\\
		\end{equation}
		The virtual dynamics of the system (\ref{sy1}) becomes
		\begin{align}\label{sy2}
		\begin{split}
		\delta\dot x_{i}=&-\rho_{i}\delta x_{i}+\delta\sigma \\
		\delta\dot\sigma=&\big[\  \sum_{i=1}^{n}a_{i}\delta x_{i}-(p+1)\delta\sigma \ \big]\
		\end{split}
		\end{align}
		The rate of change of squared distance between trajectories for the whole system and using eqn. (\ref{sy2}) is given by
		\begin{align}\label{sy3}
		\frac{d}{dt}(\delta x_{i}^{T}\delta x_{i})&= 2\delta x_{i}^{T}\delta \dot x_{i}\nonumber\\
		%&= 2\delta x_{i}^{T}(-\rho_{i}\delta x_{i})+2\delta x_{i}^{T}\delta\sigma\nonumber\\
		& =-2\rho_{i}\delta x_{i}^{T}\delta x_{i}+2\delta x_{i}^{T}\delta\sigma
		\end{align}
		Using the inequality\\
		\begin{equation}\label{ineq}
		\delta x_{i}^{T}\delta\sigma\leq\frac{\rho_{i}\delta x_{i}^{T}\delta x_{i}}{2}+\frac{\delta\sigma^{2}}{2\rho_{i}}\\
		\end{equation}
		Equation (\ref{sy3}) becomes
		\begin{equation}\label{sy21}
		\frac{d}{dt}(\delta x_{i}^{T}\delta x_{i})\leq -\rho_{i}\delta x_{i}^{T}\delta x_{i}+ \frac{\delta \sigma^{2}}{\rho_{i}}
		\end{equation}
		Also, \begin{align}
		\frac{d}{dt}(\delta\sigma^{2}) %2\delta\sigma^{T}\delta\dot\sigma\nonumber\\
		%&= 2\ \delta\sigma^{T}\bigg[ \sum_{i=1}^{n}a_{i}\delta x_{i}-(p+1)\delta\sigma\bigg]\nonumber\\
		&=\sum_{i=1}^{n} 2a_{i}\delta\sigma\delta x_{i}-2(p+1)\delta\sigma^{2}
		\end{align}
		Using the inequality (\ref{ineq}) again, we have
		\begin{align}\label{sy23}
		\frac{d}{dt}(\delta\sigma^{2})
		%\leq& \sum_{i=1}^{n}2|a_{i}|\Big(\frac{\rho_{i}\delta x_{i}^{T}\delta x_{i}}{2}+\frac{\delta\sigma^{T}\delta\sigma}{2\rho_{i}}\Big)-\nonumber\\
		% & 2(p+1)\delta \sigma^{T}\delta\sigma\nonumber\\
		%\leq&\sum_{i=1}^{n}\Big(\rho_{i}\delta x_{i}^{T}\delta x_{i}|a_{i}|+\frac{|a_{i}|\delta\sigma^{T}\delta\sigma}{\rho_{i}}\Big)-\nonumber\\
		% & 2(p+1)\delta \sigma^{T}\delta\sigma\nonumber\\
		\leq&\sum_{i=1}^{n}|a_{i}|\rho_{i}\delta x_{i}^{T}\delta x_{i}-\nonumber\\
		&\Big(2(p+1)-\sum_{i=1}^{n}\frac{|a_{i}|}{\rho_{i}}\Big)\delta\sigma^{2}
		\end{align}
		With the help of (\ref{sy21}) and (\ref{sy23}), we consider the following comparison system
		%\begin{align*}
		%\begin{split}
		%\frac{d}{dt}(\delta u_{i}^{T}\delta u_{i})=&-\rho_{i}\delta u_{i}^{T}\delta u_{i}+\frac{ \delta u_{n+1}^{T}\delta u_{n+1}}{\rho_{i}}\\
		%\frac{d}{dt}(\delta u_{n+1}^{T}\delta u_{n+1})=&\sum_{i=1}^{n} |a_{i}|\rho_{i}\delta u_{i}^{T}\delta u_{i}-\Big(2(p+1)-\\&\sum_{i=1}^{n}\frac{|a_{i}|}{\rho_{i}}\Big)
		%(\delta u_{n+1}^{T}\delta u_{n+1})
		%\end{split}
		%\end{align*}
		%This system can be written as:\\
		\begin{align}\label{comp}
		\begin{split}
		\frac{d}{dt}w_{i}=&-\rho_{i}w_{i}+\frac{w_{n+1}}{\rho_{i}}\\
		\frac{d}{dt}w_{n+1}=&\sum_{i=1}^{n}|a_{i}|\rho_{i}w_{i}-\Big(2(p+1)-\sum_{i=1}^{n}\frac{|a_{i}|}{\rho_{i}}\Big) w_{n+1}
		\end{split}
		\end{align}
		The system (\ref{comp}) is quasimonotone nondecreasing in $w$ and also convergent if~ $2(p+1)>\sum_{i=1}^{n}\frac{|a_{i}|}{\rho_{i}}$. Moreover, the equilibrium point of the original system is zero. Therefore, the original system is asymptotically stable.
	\end{example}
	\begin{example}
		Consider a nonlinear differential system
		\begin{align}\label{nonlinear}
		\begin{split}
		\dot x_{1}=&-x_{1}^{2}+x_{2}\\
		\dot x_{2}=& x_{1}-2x_{2}^{2}
		\end{split}
		\end{align}
		Taking the virtual dynamics of the above system, we get
		\begin{align}
		\begin{split}
		\delta \dot x_{1}=& -2x_{1}\delta x_{1}+\delta x_{2}\\
		\delta \dot x_{2}=&\delta x_{1}-4x_{2}\delta x_{2}
		\end{split}
		\end{align}	
		The vector valued norm of the distance between any pair of trajectories of the whole system assuming $A$ as a diagonal matrix with all diagonal entries 1 as
		$$\|\delta x\|^{2}_{v}=\left[
		\begin{array}{c}
		\delta x_{1}^2\\
		\delta x_{2}^2
		\end{array}
		\right]$$
		The rate of change of this vector valued norm can be obtained as
		\begin{align*}
		\begin{split}
		\frac{d}{dt}(\delta x_{1}^2)=%& 2\delta x_{1}\delta \dot x_{1}%=2\delta x_{1}(-2x_{1}\delta x_{1}+\delta x_{2})\\
		&-4x_{1}\delta x_{1}^2+2\delta x_{1}\delta x_{2}\\
		&\leq (1-4x_{1})\delta x_{1}^2+\delta x_{2}^2\\
		\frac{d}{dt}(\delta x_{2}^2)=%& 2\delta x_{2}\delta \dot x_{2}=2\delta x_{2}(\delta x_{1}-4x_{2}\delta x_{2})\\
		&~\delta x_{1}^2+\delta x_2^2-8x_{2}\delta x_{2}^2\\
		&\leq(1-8x_{2})\delta x_{2}^2 +\delta x_{1}^2
		\end{split}
		\end{align*}
		Therefore, the comparison system of (\ref{nonlinear})
		\begin{align}
		\begin{split}
		\dot w_{1}=(1-4x_{1})w_{1}+w_{2}\\
		\dot w_{2}=(1-8x_{2})w_{2}+w_{1}
		\end{split}
		\end{align}
		is quasimonotone nondecreasing in $w$ and also it is convergent if $x_{1}>\frac{1}{4},x_{2}>\frac{1}{8}$. Therefore, from the Theorem 2,
		$\text{dvec}(\text{diag}(\psi(t))^2)<w(t)$ taking $\text{dvec}(\text{diag}(\delta x_0)^2) <w_{0}$.
		The simulation results are shown in Fig.1. %Moreover, trajectories of the original nonlinear system exponentially converges towards the equilibrium point $(0.8,0.64)$. 
	\end{example}
	\begin{figure}[h] 
		\includegraphics[width=1.5in]{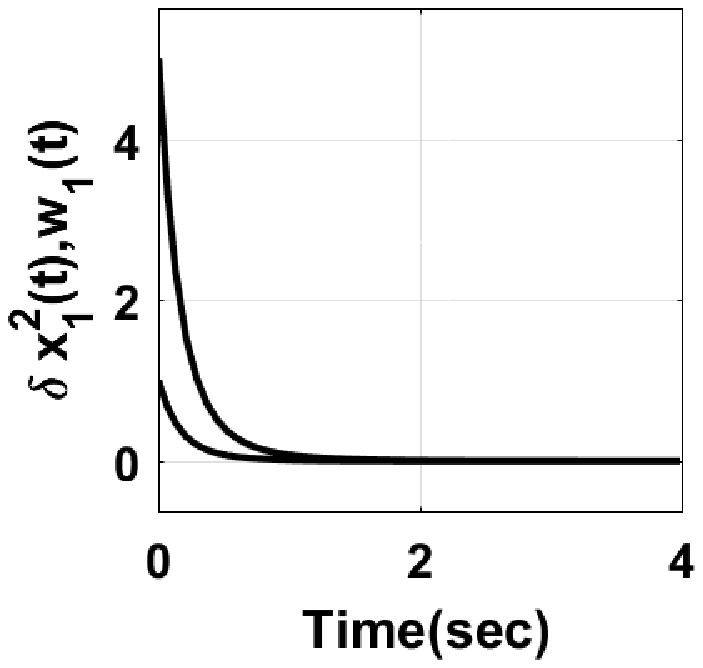}
		\includegraphics[width=1.5in]{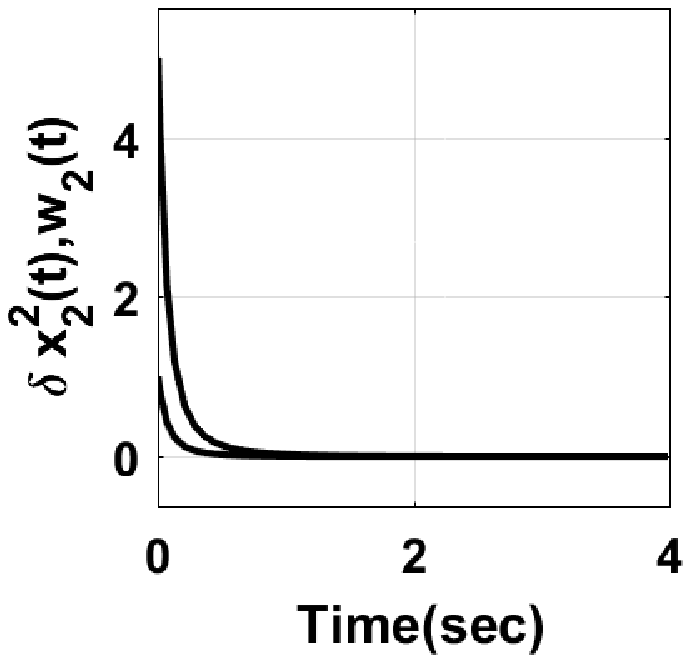}
		\caption{Response of $\text{dvec}(\text{diag}(\delta x_{1}(t))^2),\text{dvec}(\text{diag}(\delta x_{2}(t))^2)$, $w_{1}(t),w_{2}(t)$ with initial states $\text{dvec}(\text{diag}(\delta x_{1}(0))^2) = 1$, $\text{dvec}(\text{diag}(\delta x_{2}(0))^2)=1$ and $w_{1}(0)= 5, w_{2}(0)=5$}
	\end{figure}
	\begin{example}
		In this example, we consider the problem in Example \ref{example1} for $n=1$ and without the assumption $\rho_{1}\geq 0,\rho_{2}\geq 0$. Then, the function $F(w_{1},w_{2})$ of the right hand side of the system of equation of the problem :
		\begin{align}
		\begin{split}
		\frac{d}{dt}w_{1}=& -\rho_{1}w_{1}+\frac{w_{2}}{\rho_{1}}\\
		\frac{d}{dt}w_{2}=& |a_{1}|w_{1}-(2p-\frac{|a_{1}|}{\rho_{1}}-\frac{|a_{2}|}{\rho_{2}})w_{2}
		\end{split}
		\end{align}
		is not quasimonotone nondecreasing since the coefficient of $w_{2}$ may possibly be negative. For instance, if we take $a_{1}=1, a_{2}=0,\rho_{1}=-\frac{1}{2},\rho_{2}=\frac{1}{2}$ and $p=1$. The function on the right hand side of (21) becomes
		\begin{align*}
		F(w_{1},w_{2})=
		\begin{bmatrix}
		\frac{1}{2}w_{1}-2w_{2}\\
		w_{1}-4w_{2}
		\end{bmatrix}
		\end{align*}
		Note that this F is not quasimonotone nondecreasing w.r.t usual componentwise ordering in $\mathbb R^{2}$.
		However, F is quasimonotone nondecreasing w.r.t the cone $K=\{ (w_{1},w_{2})\in \mathbb{R}_{+}^{2}|w_{2}\leq w_{1}\leq 3w_{2}\}$.
		Since,
		\begin{enumerate}[(i)]
			\item  On the boundary $w_{1}=w_{2},$ taking $\phi=(1,-1)\in K^{*},$ %we see that \\
			$\langle\phi,(w_{1},w_{1})\rangle=\langle(1,-1),(w_{1},w_{1})\rangle=0$\\
			and $\langle\phi,F(w_{1},w_{2})\rangle=\langle(1,-1),(\frac{1}{2}w_{1}-2w_{1},w_{1}-4w_{1})\rangle\\
			%=\langle(1,-1),(-\frac{3}{2}w_{1}-3w_{1})\rangle\\
			=\frac{3}{2}w_{1}\geq 0, ~ ~\forall w_{1}\geq 0$
			\item On the boundary $w_{1}=3w_{2}$,
			taking $\phi=(1,-3),$ %we see that \\
			$\langle\phi,(3w_{2},w_{2})\rangle=0$ and $\langle\phi,F(3w_{2},w_{2})\rangle=1(\frac{3}{2}w_{2}-2w_{2})-3(3w_{2}-4w_{2})\\
			%=-\frac{w_{2}}{2}+3w_{2}\\
			=\frac{5}{2}w_{2}\geq 0,~ ~\forall w_{2}\geq 0.$
		\end{enumerate}
	\end{example}
	\section{Conclusion}
	A generalized vector contraction analysis framework utilizing the notion of vector distances for addressing convergence of trajectories of nonlinear dynamical systems is presented in this paper. In particular, we derived comparison results employing the quasi-monotonicity property of the function for proving convergence of the original dynamical system by comparing the solutions of the auxiliary system and the original system. Furthermore, in order to overcome the componentwise inequalities of vectors, the results are also derived in the framework of the cone ordering. Illustration of the derived results is presented by examples. Finally, from the derived results, it is able to show the convergence of the nonlinear system by proving the  convergence of the comparison system without much less strict conditions.
           % and a bib file to produce the 
% bibliography (preferred). The
% correct style is generated by
% Elsevier at the time of printing.

%\begin{thebibliography}{99}     % Otherwise use the  
% thebibliography environment.
% Insert the full references here.
% See a recent issue of Automatica 
% for the style.
%  \bibitem[Heritage, 1992]{Heritage:92}
%     (1992) {\it The American Heritage. 
%     Dictionary of the American Language.}
%     Houghton Mifflin Company.
%  \bibitem[Able, 1956]{Abl:56}
%     B.~C.~Able (1956). Nucleic acid content of macroscope. 
%     {\it Nature 2}, 7--9. 
%  \bibitem[Able {\em et al.}, 1954]{AbTaRu:54}   
%     B.~C. Able, R.~A. Tagg, and M.~Rush (1954).
%     Enzyme-catalyzed cellular transanimations.
%     In A.~F.~Round, editor, 
%     {\it Advances in Enzymology Vol. 2} (125--247). 
%     New York, Academic Press.
%  \bibitem[R.~Keohane, 1958]{Keo:58}
%     R.~Keohane (1958).
%     {\it Power and Interdependence: 
%     World Politics in Transition.}
%     Boston, Little, Brown \& Co.
%  \bibitem[Powers, 1985]{Pow:85}
%     T.~Powers (1985).
%     Is there a way out?
%     {\it Harpers, June 1985}, 35--47.

%\end{thebibliography}
\begin{center}
	\textit{Appendix}
\end{center}
\textit{Proof of Theorem 1.}\\
\textit{Proof of (a) Norm Property}.
\begin{enumerate}[(i)]
	\item This property follows from the fact that for each $i = 1, 2, \cdots, m$, $D_i(\delta x) = 0 \Longleftrightarrow (\delta x_1, \delta x_2, \cdots, \delta x_n) = (0, 0, \cdots, 0).$
	\item From the expression of $D_i(\delta x)$, we note that $D_i(c~\delta x) = |c| D_i(\delta x)$ for each $i = 1, 2, \cdots, m$.  Hence, $\|c\delta x\|_v = |c|~\|\delta x\|_v$.
	\item To prove this property, it suffice to prove that for each $i = 1, 2, \cdots, m$, the $i$-th component $\|\delta x\|_v$ has the property that
	$D_i(\delta x + \delta y) \le D_i(\delta x) + D_i(\delta y) ~\text{ for all }~ \delta x, \delta y \in \mathbb{R}^n.$
	Since every $a_{ij} \ge 0$, we get from the Cauchy-Schwarz inequality that
	\begin{align*}\label{cs}
	\sum_{j = 1}^n a_{ij} |\delta x_j||\delta y_j|~&~ = \sum_{j = 1}^n \left|\sqrt{a_{ij}} ~ \delta x_j\right|~\left|\sqrt{a_{ij}} ~ \delta y_j\right|\\
	~&~\le \sqrt{ \sum_{j = 1}^n \sqrt{a_{ij}} ~ \delta x_j^2 }
	\sqrt{ \sum_{j = 1}^n
		\sqrt{a_{ij}} ~ \delta y_j^2}.
	\end{align*}
	\begin{align*}
	(D_i(\delta x + \delta y))^2 ~&~ = ~ \sum_{j = 1}^n a_{ij} ~ (\delta x_j + \delta y_j)^2 \\
	~&~ \le ~ \sum_{j = 1}^n a_{ij} ~ \left( \delta x_j^2 + 2 \delta x_j \delta y_j + \delta y_j^2 \right) \\
	~&~ \le ~ \sum_{j = 1}^n a_{ij} ~ \delta x_j^2 + 2 ~ \sum_{j = 1}^n a_{ij} ~ |\delta x_j| ~ |\delta y_j| +\\
	~&~~~~~~ \sum_{j = 1}^n a_{ij} ~ \delta y_j^2 \\
	~&~ \le ~ \left(\sqrt{\sum_{j = 1}^n a_{ij} ~ \delta x_j^2} + \sqrt{\sum_{j = 1}^n a_{ij} ~ \delta y_j^2} \right)^2.
	\end{align*}
\end{enumerate}
Therefore, $D_i(\delta x + \delta y) \le D_i(\delta x) + D_i(\delta y)$.\\
\textit{Proof of (b) Convexity property}.
The proof is followed from the fact that the $i^{th}$ component function $D_i(\delta x)$, being a norm in $\mathbb{R}^n$, a convex function on $\mathbb{R}$, for each $i = 1, 2, \cdots, n$.\\
\textit{Proof of (c) Locally-Lipschitzian property}.
For any $i \in \{1, 2, \cdots, n\}$, the $i$-th component function of $F(\delta x)$ is
\[(D_i(\delta x))^2 = \sum_{j = 1}^n a_{ij} \delta x_j^2.  \]
Note that the Hessian matrix of $(D_i(\delta x))^2$ is the diagonal matrix diag$(a_{i1}, a_{i2}, \cdots, a_{im})$ which is positive semi-definite as every $a_{ij} \ge 0$. Hence, $(D_i(\delta x))^2$ is a convex function on $\mathbb{R}^n$.
By the result in \cite{way1972every}, for each $i = 1, 2, \cdots, m$, there exists a constant $k_i > 0$ such that
$ \left|(D_i(\delta x))^2 - (D_i(\delta y))^2 \right| \le k_i \|\delta x -\delta y \| ~\text{ for all }~ \delta x, \delta y \in  C.$
Therefore, for any $\delta x, \delta y$ in $C$,
\begin{align*}
\left\| F(\delta x) - F(\delta y) \right\| & ~=~ \sqrt{\sum_{j = 1}^n \left((D_1(\delta x))^2 - (D_1(\delta y))^2\right)^2} \\
& ~=~ \sqrt{k_1^2 + k_2^2 + \cdots + k_m^2} ~\|\delta  x - \delta y\|.
\end{align*}
The result follows by letting $k = \sqrt{k_1^2 + k_2^2 + \cdots + k_m^2}$.\\
\textit{Proof of (d) Differentiability}.
Let $h = (h_1, h_2, \cdots, h_n)^T$. The result is followed by the following limit:
\begin{align*}
& \lim_{h \rightarrow 0} \frac{1}{\|h\|} \left\| F(\delta x+h) - F(\delta x) - 2 ~A~ \delta x~ h\right\|  \\
= ~&~ \lim_{h \rightarrow 0} \frac{1}{\|h\|} \left\| A(\delta x+h)^2 - A(\delta x)^2 - 2 ~A~ \delta x~ h\right\| \\
= ~&~ \lim_{h \rightarrow 0} \frac{1}{\|h\|} \left\| A \left[ h_1^2, h_2^2, \cdots, h_n^2\right]^T \right \| \\
= ~&~ \lim_{h \rightarrow 0}  \left\| A \left[ \frac{h_1^2}{\|h\|}, \frac{h_2^2}{\|h\|}, \cdots, \frac{h_n^2}{\|h\|}\right]^T \right \|\\
= ~&~ 0.
\end{align*}

% Sections and subsections are su		

\end{document}